\newtheorem{theorem}{Theorem}[section]
\newtheorem{lemma}[theorem]{Lemma}
\newtheorem{observation}[theorem]{Observation}
\newtheorem{corollary}[theorem]{Corollary}
\theoremstyle{definition}
\newtheorem{definition}[theorem]{Definition}
\theoremstyle{remark}
\newtheorem{remark}[theorem]{Remark}
\newcommand\remove[1]{}
\newcommand{\conv}{{\rm conv}\hskip0.02cm}
\def\f2{\mathbb{F}_2}
\newcommand{\1}{\mathbf{1}}
\newcommand{\ep}{\varepsilon}
\begin{document}

\title{On embeddings of locally finite metric spaces into $\ell_p$}

\author{Sofiya~Ostrovska\\
\\
Department of Mathematics\\ Atilim University \\ 06830 Incek,
Ankara, TURKEY \\
\textit{E-mail address}:
\texttt{sofia.ostrovska@atilim.edu.tr}\\
\\
and \\
\\
Mikhail~I.~Ostrovskii\footnote{corresponding author}\\
\\
Department of Mathematics and Computer Science\\
St. John's University\\
8000 Utopia Parkway\\
Queens, NY 11439, USA \\
\textit{E-mail address}: \texttt{ostrovsm@stjohns.edu}\\
Phone: 718-990-2469\\
Fax: 718-990-1650}

\date{\today}
\maketitle

\begin{abstract}

It is known that if finite subsets of a locally finite metric
space $M$ admit $C$-bilipschitz embeddings into $\ell_p$ $(1\le
p\le \infty)$, then for every $\ep>0$, the space $M$ admits a
$(C+\ep)$-bilipschitz embedding into $\ell_p$. The goal of this
paper is to show that for $p\ne 2,\infty$ this result is sharp in
the sense that  $\ep$ cannot be dropped out of  its statement.

\end{abstract}

{\small \noindent{\bf Keywords.} Distortion of a bilipschitz
embedding, isometric embedding, locally finite metric space,
strictly convex Banach space}\medskip

{\small \noindent{\bf 2010 Mathematics Subject Classification.}
Primary: 46B85; Secondary: 46B04.}

\begin{large}

\section{Introduction and Statement of Results}

During the last decades, the study of bilipschitz embeddings of
metric spaces into Banach spaces has become a field of intensive
research with a great number of applications. The latter are not
restricted to the area of Functional Analysis, but also include
Graph Theory, Group Theory, and Computer Science. We refer to
\cite{Lin02, Mat02, Nao10, Ost13, WS11}.  This work is focused on
the study of relations between the embeddability into $\ell_p$ of an infinite
metric space and its finite pieces. Let us recollect some needed
notions.

\begin{definition} A metric space is called {\it locally finite}
if each ball of finite radius in it has finite cardinality.
\end{definition}

\begin{definition}\label{D:LipBilip} Let $(A,d_A)$ and $(Y,d_Y)$ be metric spaces. Given,  $1\le C<\infty$,  a map $f:A\to Y$, is called a
{\it $C$-bilipschitz embedding} if there exists $r>0$ such that
\begin{equation}\label{E:MapDist}\forall u,v\in A\quad rd_A(u,v)\le
d_Y(f(u),f(v))\le rCd_A(u,v).\end{equation} A  map $f$ is a {\it
bilipschitz embedding} if it is $C$-bilipschitz for some $1\le
C<\infty$. The smallest constant $C$ for which there exists $r>0$
such that \eqref{E:MapDist} is satisfied, is called the {\it
distortion} of $f$.
\end{definition}

Unexplained terminology can be found in \cite{LT77,Ost13}.\medskip

It has been known that the bilipschitz embeddability of locally
finite metric spaces into Banach spaces is finitely determined in
the sense described by  the following theorem.

\begin{theorem}[\cite{Ost12}]\label{T:bilip} Let $A$ be a locally finite metric space whose
finite subsets admit bilipschitz embeddings with uniformly bounded
distortions into a Banach space $X$. Then, $A$ also admits a
bilipschitz embedding into $X$.
\end{theorem}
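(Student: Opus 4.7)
The plan is to reduce to a countable setting, unify the given finite embeddings via an ultrafilter, and then descend from the resulting ultrapower back to $X$.

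\textbf{Setup.} Since local finiteness forces $A$ to be countable, I enumerate $A = \{a_0, a_1, \ldots\}$ and set $A_n := \{a_0, \ldots, a_n\}$. By hypothesis there is a uniform $C \ge 1$ such that each $A_n$ admits a $C$-bilipschitz embedding; after translation and rescaling, I obtain maps $f_n : A_n \to X$ with $f_n(a_0) = 0$ and
\[
d(u,v) \le \|f_n(u) - f_n(v)\|_X \le C\, d(u,v) \quad \text{for all } u,v \in A_n.
\]

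\textbf{Ultrafilter assembly.} Fix a free ultrafilter $\mathcal{U}$ on $\mathbb{N}$. Since $\|f_n(a_i)\|_X \le C\, d(a_0, a_i)$ for $n \ge i$, the sequence $(f_n(a_i))_{n \ge i}$ is bounded and determines an element $F(a_i) := [f_n(a_i)]^{\mathcal{U}}$ in the Banach ultrapower $X^{\mathcal{U}}$. Because
\[
\|F(a_i) - F(a_j)\|_{X^{\mathcal{U}}} = \lim_{n \to \mathcal{U}} \|f_n(a_i) - f_n(a_j)\|_X \in [d(a_i,a_j),\, C\, d(a_i,a_j)],
\]
the map $F : A \to X^{\mathcal{U}}$ is a $C$-bilipschitz embedding. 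The remaining task is therefore to transfer this embedding into $X$ itself.

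\textbf{Descent to $X$ (main obstacle).} The transfer relies on the standard fact that $X^{\mathcal{U}}$ is finitely representable in $X$: for every $\delta > 0$ and every finite-dimensional subspace $V \subset X^{\mathcal{U}}$ there is a linear $(1+\delta)$-isomorphism $V \to X$. Applying this to $V_n := \mathrm{span}\{F(a_0), \ldots, F(a_n)\}$ with $\delta_n \to 0$ produces candidate embeddings $g_n := \Phi_n \circ F : A_n \to X$ of distortion at most $C(1+\delta_n)$. The difficulty is that these $g_n$ are chosen independently and need not be mutually compatible, so they do not automatically assemble into a single $g : A \to X$. The step I expect to be hardest is an inductive coherence construction: having built $g_n$, invoke a local-reflexivity-type argument to pick $\Phi_{n+1}$ (or to correct it by a suitable affine adjustment) so that $g_{n+1}|_{A_n}$ agrees with $g_n$ up to a summable error $\eta_n$. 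Local finiteness of $A$ is exactly what makes this feasible, since at each stage only finitely many new pairwise constraints must be controlled. The pointwise limit $g(a_i) := \lim_n g_n(a_i)$ then exists in $X$ and, by a standard $3\ep$-type estimate, gives a $(C+\ep)$-bilipschitz embedding of $A$ into $X$ for arbitrarily small $\ep > 0$.
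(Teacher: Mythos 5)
Your setup and ultrafilter steps are fine, and they coincide with how the argument of \cite{Ost12} begins: the map $F\colon A\to X^{\mathcal U}$ is indeed a $C$-bilipschitz embedding into the ultrapower. The gap is the descent step, which you yourself flag as the hardest part and then only describe as a hope. Finite representability of $X^{\mathcal U}$ in $X$ produces, for each $n$ \emph{separately}, a $(1+\delta_n)$-isomorphism $\Phi_n$ of $V_n$ into $X$, but it gives no relation whatsoever between $\Phi_{n+1}|_{V_n}$ and $\Phi_n$: the almost-isometric copy of $V_{n+1}$ that you find inside $X$ need not contain, or lie anywhere near, the copy of $V_n$ chosen at the previous stage, and the principle of local reflexivity (which compares $X^{**}$ with $X$) supplies no such compatibility either. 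Arranging that $g_{n+1}$ agrees with $g_n$ on $A_n$ up to a summable error is precisely the difficulty, not a routine consequence of local finiteness: even if $\Phi_{n+1}$ is an almost isometry, the discrepancy $\sup_{a\in A_n}\|g_{n+1}(a)-g_n(a)\|$ is a priori of the order of the diameter of $g_n(A_n)$, which grows with $n$, so no telescoping/limit argument is available without a genuinely new idea. Note also that if your scheme worked, it would prove $D(X)\le 1^+$ for \emph{every} Banach space $X$ (nothing in your construction uses any property of $X$), which would supersede Theorem \ref{T:FDwithD} (an absolute constant $D$) and make the structural hypotheses of Theorem \ref{T:Above} and the lower-bound results cited in the paper's closing Remark (\cite[Theorem 2.9]{KL08}, \cite[Theorem 1.12]{OO18+}) pointless; this alone should be a warning that the omitted step conceals the whole content of the theorem.

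For comparison, the proof in \cite{Ost12} does not attempt to make the finite-dimensional copies coherent. It glues embeddings of an increasing sequence of balls of $A$ (finite, by local finiteness) along dyadic annuli determined by the distance to a base point: each new embedding is translated so as to agree with the previous one at a single point, and the remaining mismatch on an annulus --- which is of the order of the distance from the base point, hence comparable to the distances between points of consecutive annuli --- is absorbed into a multiplicative constant via Lipschitz weight functions. This is where local finiteness is really used, and it is exactly why the general theorem yields only $D(X)\le D$ for an absolute constant, while the $1^+$ bound requires the special $\ell_p$-sum structure of Theorem \ref{T:Above}. As written, your proposal assumes away this central difficulty, so it does not constitute a proof.
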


Theorem \ref{T:bilip} has many predecessors, see
\cite{Bau07,Bau12,BL08,Ost06,Ost09}. Applications of this theorem
to the coarse embeddings important for Geometric Group Theory and
Topology are discussed in \cite{Ost12}.  To expand on the theme,
the argument of \cite{Ost12} yields a stronger result, namely the
one stated as Theorem \ref{T:FDwithD}. In order to formulate
Theorem \ref{T:FDwithD}, it is handy to employ the parameter
$D(X)$ of a Banach space $X$ introduced in \cite{OO18+}. Let us
recollect its definition. Given a Banach space $X$ and a real
number $\alpha\ge 1$, we write:

\begin{itemize}

\item  $D(X)\le\alpha$ if, for any locally finite metric space
$A$, all finite subsets of which admit bilipschitz embeddings into
$X$ with distortions $\le C$, the space $A$ itself admits a
bilipschitz embedding into $X$ with distortion $\le \alpha\cdot
C$;

\item  $D(X)=\alpha$ if $\alpha$ is the least number for which
$D(X)\le\alpha$;

\item  $D(X)=\alpha^+$ if,  for every $\ep>0$, the condition
$D(X)\le\alpha+\ep$ holds, while $D(X)\le\alpha$ does not;

\item $D(X)=\infty$ if $D(X)\le\alpha$ does not hold for
any $\alpha<\infty$.

\end{itemize}

In addition, we use inequalities like $D(X)<\alpha^+$ and
$D(X)<\alpha$ with the natural meanings, for example
$D(X)<\alpha^+$ indicates that either $D(X)=\beta$ for some
$\beta\le\alpha$ or $D(X)=\beta^+$ for some $\beta<\alpha$.

\begin{theorem}[\cite{Ost12}]\label{T:FDwithD} There exists an absolute constant $D\in[1,\infty)$, such
that for an arbitrary Banach space $X$ the inequality $D(X)\le D$
holds.
\end{theorem}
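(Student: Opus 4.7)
The plan is to prove Theorem~\ref{T:FDwithD} by a quantitative refinement of the argument underlying Theorem~\ref{T:bilip}. Fix a locally finite metric space $A$ all of whose finite subsets embed into $X$ with distortion at most $C$. Choose a basepoint $o \in A$ and set $B_n := \{a \in A : d_A(a, o) \le 2^n\}$; by local finiteness each $B_n$ is finite, and $\bigcup_n B_n = A$. For each $n$ I would pick an embedding $f_n : B_n \to X$ with, after rescaling and translation, $f_n(o) = 0$ and $d_A(u,v) \le \|f_n(u) - f_n(v)\| \le C \, d_A(u,v)$ for all $u,v \in B_n$.

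First I would stabilize this sequence via an ultrapower. Fixing a free ultrafilter $\mathcal{U}$ on $\mathbb{N}$ and putting $F(a) := [f_n(a)]_{n \in \mathcal{U}}$ (well defined since $(f_n(a))_n$ is eventually bounded), the resulting map $F : A \to X_{\mathcal{U}}$ is bilipschitz with distortion at most $C$, because the two-sided Lipschitz bounds pass to the ultralimit. Since $X_{\mathcal{U}}$ is finitely representable in $X$, for every finite $\Phi \subset F(A)$ and every $\ep > 0$ there is a $(1+\ep)$-isomorphic copy of $\mathrm{span}(\Phi)$ inside $X$.

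The main step is to glue these finite-dimensional approximate copies into a single embedding $g : A \to X$ of distortion at most $D \cdot C$ for an absolute constant $D$. I would work on the dyadic annuli $R_n := B_{n+1} \setminus B_n$: for each $n$, use finite representability to choose a $(1 + \ep_n)$-bilipschitz map $\varphi_n$ of $\mathrm{span}(F(B_{n+1}))$ into $X$, with $\sum_n \ep_n < \infty$, and then set $g|_{R_n} := \varphi_n \circ F + t_n$ for translations $t_n \in X$ chosen inductively so that the images of distinct annuli remain separated by amounts comparable to the actual $d_A$-distance between them.

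The main obstacle is the simultaneous control of distortion across annulus boundaries and between widely separated annuli. Points of $R_n$ are mapped into a ball of radius $\approx C \cdot 2^{n+1}$ around $t_n$, so the translations must satisfy $\|t_n - t_m\| \approx C \cdot 2^{\max(n,m)}$, placed in directions of $X$ chosen to avoid collisions with images of intermediate annuli. Verifying that such a recursive choice of the $t_n$, together with summable $\ep_n$, yields a distortion bound on $g$ that depends on neither $A$, $X$, nor $C$ is the central technical step; the resulting universal constant is the $D$ asserted by Theorem~\ref{T:FDwithD}.
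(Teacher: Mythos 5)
Note first that Theorem~\ref{T:FDwithD} is quoted here from \cite{Ost12}; the present paper contains no proof of it, so your proposal has to be measured against the argument of \cite{Ost12} (refined in \cite{OO18+}). Your opening steps are fine and do match the start of that argument: the normalized embeddings $f_n$ of the balls $B_n$, the ultralimit $F:A\to X_{\mathcal U}$ with distortion at most $C$, and the finite representability of $X_{\mathcal U}$ in $X$ are all correct and standard.

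The genuine gap is the gluing step, and the specific mechanism you propose cannot work. Your requirement that the images of distinct annuli be ``separated by amounts comparable to the actual $d_A$-distance between them'' is unattainable by translations: the distances between points of $R_n$ and $R_{n+1}$ range from essentially $0$ (nothing in local finiteness prevents points on either side of the cut at radius $2^{n+1}$ from being arbitrarily close) up to about $2^{n+2}$, while a translation moves the whole image of an annulus rigidly. Worse, the prescription $\|t_n-t_{n+1}\|\approx C\cdot 2^{n+1}$ destroys the \emph{upper} Lipschitz bound outright: for $u\in R_n$, $v\in R_{n+1}$ with $d_A(u,v)$ tiny one gets $\|g(u)-g(v)\|$ of order $C\cdot 2^{n}$, so the distortion of $g$ is unbounded, not merely a large constant. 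Nor is the problem repaired by taking $t_n=t_{n+1}$: the maps $\varphi_n$ and $\varphi_{n+1}$ are produced by two independent applications of finite representability to the overlapping spaces $\mathrm{span}(F(B_{n+1}))$ and $\mathrm{span}(F(B_{n+2}))$, and nothing forces $\varphi_n(F(u))$ and $\varphi_{n+1}(F(v))$ to be close even when $F(u)$ and $F(v)$ are; the two $(1+\ep_n)$-copies may sit in completely unrelated subspaces of $X$. (There is also a secondary quantitative problem with the lower bound for well-separated annuli, since $\|\varphi_m(F(v))\|$ can be as large as about $C\cdot 2^{m+1}$, exceeding $\|t_n-t_m\|\approx C\cdot 2^{m}$, so cancellation is not excluded.) In short, the difficulty is one of \emph{coherence of the embeddings across annulus boundaries}, i.e. an upper-bound problem, whereas your device only addresses separation, i.e. the lower bound. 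This coherence problem is exactly the heart of Theorem~\ref{T:FDwithD}, and \cite{Ost12} overcomes it with a genuinely different gluing construction; declaring it ``the central technical step'' and leaving it to a recursive choice of translations and directions leaves the proof incomplete precisely where the theorem's content lies.
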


Recently, new estimates of the parameter $D(X)$ for some classes
of Banach spaces have been obtained in \cite{OO18+}. Recall that a
family of finite-dimensional Banach spaces $\{X_n\}_{n=1}^\infty$
is said to be {\it nested} if $X_n$ is a proper subspace of
$X_{n+1}$ for every $n\in\mathbb{N}$. For such families, an
 estimate for $D(X)$ from above is expressed by:

\begin{theorem}[{\cite[Theorem 1.9]{OO18+}}]\label{T:Above}  Let $1\le
p<\infty$. If $\{X_n\}_{n=1}^\infty$ is a nested family of
finite-dimensional Banach spaces, then
$\displaystyle{D\left(\left(\oplus_{n=1}^\infty
X_n\right)_p\right)\le 1^+}$.
\end{theorem}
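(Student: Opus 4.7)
The plan is to construct the desired $(1+\ep)C$-bilipschitz embedding $F:A\to X:=\bigl(\bigoplus_{n=1}^\infty X_n\bigr)_p$ as a weighted $\ell_p$-block sum of the given $C$-bilipschitz embeddings of an exhaustion of $A$, each shifted into a fresh coordinate block of $X$ via the nested isometric inclusions $X_j\hookrightarrow X_{j+m}$.

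First I would fix $\ep>0$ and a base point $a_0\in A$, choose an increasing exhaustion $\{a_0\}\subset A_1\subset A_2\subset\cdots$ of $A$ by finite subsets, and, using the hypothesis, pick for each $n$ a $C$-bilipschitz map $\phi_n:A_n\to X$ with $\phi_n(a_0)=0$. Since $\phi_n(A_n)$ is finite, its image lies in some $\bigoplus_{j=1}^{K_n}X_j$. Exploiting the nested structure, I would compose coordinatewise with the isometric inclusions $X_j\hookrightarrow X_{j+L_{n-1}}$ to obtain a shifted embedding $\widetilde\phi_n:A_n\to X$ with image in the block $B_n:=\bigoplus_{j=L_{n-1}+1}^{L_n}X_j$, where $L_n:=L_{n-1}+K_n$ and $L_0:=0$, making the blocks $B_1,B_2,\ldots$ pairwise disjoint. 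Extending $\widetilde\phi_n$ to all of $A$ via a nearest-point retraction $\pi_n:A\to A_n$ (set $\widetilde\phi_n^\sharp(a):=\widetilde\phi_n(\pi_n(a))$), I would then define
\[
F(a):=\sum_{n=1}^\infty \lambda_n\,\widetilde\phi_n^\sharp(a)
\]
for positive weights $(\lambda_n)$ yet to be fixed. Disjointness of the blocks gives
\[
\|F(a)-F(b)\|_X^p=\sum_n\lambda_n^p\,\|\widetilde\phi_n^\sharp(a)-\widetilde\phi_n^\sharp(b)\|^p,
\]
where for $n\ge N^*(a,b):=\min\{n:a,b\in A_n\}$ the $n$-th summand equals $\lambda_n^p\|\phi_n(a)-\phi_n(b)\|^p\in[\lambda_n^p d_A(a,b)^p,\lambda_n^p C^p d_A(a,b)^p]$, while for $n<N^*(a,b)$ the triangle inequality and the Lipschitz property of $\phi_n$ give the bound $\|\widetilde\phi_n^\sharp(a)-\widetilde\phi_n^\sharp(b)\|\le C\bigl(d_A(a,b)+\dist(a,A_n)+\dist(b,A_n)\bigr)$.

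The main obstacle will be to choose the exhaustion $(A_n)$ and the weights $(\lambda_n)$---possibly together with a per-index rescaling of the $\phi_n$---so that two conflicting constraints hold simultaneously over all pairs $(a,b)\in A$: the aligned tail $\sum_{n\ge N^*(a,b)}\lambda_n^p$ must be uniformly bounded below by a positive constant independent of $(a,b)$, so that $F$ has a uniform lower Lipschitz constant; and the misaligned contribution from indices $n<N^*(a,b)$ must be absorbed into an $\ep$-perturbation of the aligned one. This is delicate because summability of the weights (needed for $F(a)\in X$) tends to pull the aligned tail toward zero for pairs far from $a_0$, while the misaligned error typically grows with $d_A(a,a_0)+d_A(b,a_0)$. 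The resolution will force the $A_n$ to grow rapidly enough that $\dist(\cdot,A_n)$ vanishes quickly for each fixed point, and the weights (or rescalings) to be tuned so that the aligned mass is effectively concentrated on the relevant index range for every pair $(a,b)$; coordinating these choices uniformly across $A$ is the technical heart of the argument.
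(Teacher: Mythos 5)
Note first that this paper does not prove Theorem \ref{T:Above}; it is quoted from \cite{OO18+}, whose argument is a gluing construction in the spirit of Baudier--Lancien and \cite{Ost12}. Your block-shifting device (using nestedness to place images of the finite embeddings into disjoint coordinate blocks of $\left(\oplus_n X_n\right)_p$) is indeed part of that circle of ideas, but the global structure of your map is different, and it is exactly there that the plan breaks down.

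The gap is not a technicality to be tuned away: with $F(a)=\sum_n\lambda_n\widetilde\phi_n^\sharp(a)$ and point-independent weights, the two requirements you list are mutually exclusive. Fold any per-index rescaling of $\phi_n$ into effective constants $c_n>0$ so that the $n$-th block contributes between $c_n d_A(a,b)$ and $c_nCd_A(a,b)$ for aligned pairs. Since $A$ is infinite and locally finite, new points keep appearing at arbitrarily late stages, and for a pair first captured at stage $N$ the only guaranteed lower bound is $\bigl(\sum_{n\ge N}c_n^p\bigr)^{1/p}d_A(a,b)$ (the terms with $n<N$ can vanish, e.g.\ when $\pi_n(a)=\pi_n(b)$). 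A uniform lower Lipschitz constant therefore forces $\sum_{n\ge N}c_n^p\ge r^p$ for infinitely many $N$, hence $\sum_n c_n^p=\infty$; but then for any fixed $a\ne a_0$ one has $\|F(a)\|^p=\sum_n\lambda_n^p\|\widetilde\phi_n(\pi_n(a))\|^p\ge\sum_{n\ge N^*(a)}c_n^p\,d_A(a,a_0)^p=\infty$, so $F(a)\notin X$. Thus no choice of exhaustion, weights, or rescalings rescues this ansatz; the ``concentration of aligned mass on the relevant index range'' you hope for cannot be achieved with weights independent of the point. The proof in \cite{OO18+} avoids this by making the weights point-dependent: one embeds balls $B(a_0,R_n)$ with rapidly increasing radii, places the images in fresh blocks as you do, and defines $F(a)$ using only the (at most two) maps corresponding to the annulus containing $a$, glued by convex coefficients depending on $d_A(a,a_0)$; the factor $1+\ep$ is obtained by letting $R_{n+1}/R_n$ grow fast enough that the gluing error is $\ep$-small relative to the distances involved. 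Without replacing your global sum by such a locally finite, point-dependent combination, the plan cannot be completed.
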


The next assertion is an immediate consequence of Theorem
\ref{T:Above}:

\begin{corollary}[{\cite[Corollary 1.10]{OO18+}}]\label{C:Ell_p} If $1\le p<\infty$, then
$D(\ell_p)\le 1^+$.
\end{corollary}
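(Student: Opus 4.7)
The plan is to exhibit $\ell_p$ itself as the $\ell_p$-direct sum of a nested family of finite-dimensional Banach spaces, so that Theorem \ref{T:Above} applies verbatim. Concretely, I would set $X_n:=\ell_p^n$ and regard $X_n$ as a subspace of $X_{n+1}$ via the isometric inclusion
\[
(x_1,\ldots,x_n)\longmapsto (x_1,\ldots,x_n,0).
\]
Because $\dim X_n=n<n+1=\dim X_{n+1}$, each $X_n$ is a proper subspace of $X_{n+1}$, so $\{X_n\}_{n=1}^\infty$ is a nested family of finite-dimensional Banach spaces in the sense defined just before Theorem \ref{T:Above}.

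The next step is the identification $\big(\oplus_{n=1}^\infty X_n\big)_p\equiv\ell_p$. A vector in the left-hand side is a sequence $\xi=(\xi_n)_{n\ge 1}$ with $\xi_n\in\ell_p^n$ and $\|\xi\|=\big(\sum_{n}\|\xi_n\|_p^p\big)^{1/p}$; concatenating the coordinate blocks $\xi_1,\xi_2,\ldots$ into a single sequence is a surjective linear isometry onto $\ell_p$. Since $D(\cdot)$ depends only on the isometry class of its argument (a surjective linear isometry transports any bilipschitz embedding into the target without changing its distortion), Theorem \ref{T:Above} gives
\[
D(\ell_p)=D\!\left(\big(\oplus_{n=1}^\infty X_n\big)_p\right)\le 1^+,
\]
which is the stated conclusion.

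There is no genuine obstacle to overcome: the corollary is essentially a bookkeeping consequence of Theorem \ref{T:Above}. The only points requiring attention are the two trivial verifications above, namely that the natural inclusions make $\{\ell_p^n\}$ a \emph{nested} family and that concatenation of coordinate blocks provides an isometric identification of $\big(\oplus_n \ell_p^n\big)_p$ with $\ell_p$.
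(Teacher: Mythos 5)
Your proposal is correct and is exactly the intended derivation: the paper treats Corollary \ref{C:Ell_p} as an immediate consequence of Theorem \ref{T:Above}, obtained by viewing $\ell_p$ as $\bigl(\oplus_{n=1}^\infty \ell_p^n\bigr)_p$ with the natural nested inclusions, just as you do. Both verifications you flag (nestedness of $\{\ell_p^n\}$ and the isometric identification by concatenation, plus the isometric invariance of $D(\cdot)$) are the whole content of the step, so nothing is missing.
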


It should be mentioned that the case where  $p=\infty$ was
discarded because the classical result of Fr\'echet \cite{Fre10}
implies that $D(\ell_\infty)=1$. Observe also that it is a
well-known fact that $D(\ell_2)=1$. Although the paper
\cite{OO18+} contains some estimates for $D(X)$ from below, the
following question was left open: whether $D(\ell_p)=1^+$ or
$D(\ell_p)=1$ for $1\le p<\infty$, $p\ne 2$?\medskip

The main goal of this paper is to complete the picture by proving
that $D(\ell_p)\ge 1^+$ if $p\in[1,\infty)$, $p\ne 2$. See
Theorem \ref{T:ell1>1} and Corollary \ref{C:ellp>1}. It is worth
pointing out that our proofs for the cases $p=1$ and $p>1$ are
different from each other.
\medskip

Recall that a Banach space is called {\it strictly convex} if its
unit sphere does not contain line segments. In the present work,
it is shown that $D(X)>1$ for a large class of strictly convex
Banach spaces $X$ implying that $D(X)=1^+$ for all strictly convex
Banach spaces satisfying the assumption of Theorem \ref{T:Above}.
To be more specific, the following statement will be proved (see
Section \ref{S:p>1}):

\begin{theorem}\label{T:StrConv} Let $X$ be a strictly convex Banach space such that all finite subsets of $\ell_2$
admit isometric embeddings into $X$, but $\ell_2$ itself does not
admit an isomorphic embedding into $X$. Then $D(X)>1$.
\end{theorem}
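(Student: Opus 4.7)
The plan is to prove the contrapositive: assuming $D(X)\le 1$, I will derive an isomorphic embedding $\ell_2\hookrightarrow X$, contradicting the hypothesis. Since by hypothesis every finite subset of $\ell_2$ embeds isometrically into $X$, any locally finite $A\subset\ell_2$ (with the inherited metric) has all of its finite subsets embedding into $X$ with distortion $1$; the assumption $D(X)\le 1$ then promotes this to an isometric embedding $\phi:A\to X$ of $A$ itself. It therefore suffices to construct a single locally finite $A\subset\ell_2$ so that the existence of such a $\phi$ forces $\ell_2\hookrightarrow X$ isomorphically.

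To that end, I would build $A$ to contain, for each $n\in\mathbb{N}$, a finite lattice-like configuration $L_n$ lying in the $n$-dimensional coordinate subspace $\ell_2^n\subset\ell_2$, with the various $L_n$'s sharing a common skeleton or nested structure. Since a genuinely nested family of dense lattices cannot be locally finite (bounded balls would swallow infinitely many lattice points), the configurations must be sparsified—for instance by using scaled integer lattices $(1/N_n)\mathbb{Z}^n$ intersected with bounded regions, with $N_n$ tuned so that $A=\{0\}\cup\bigcup_n L_n$ is locally finite in the $\ell_2$-metric.

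Strict convexity of $X$ enters at the next step. In a strictly convex Banach space, given $a,b\in X$, the unique point $m$ with $\|m-a\|=\|m-b\|=\tfrac{1}{2}\|a-b\|$ is the algebraic midpoint $(a+b)/2$. Hence whenever $x,y,(x+y)/2\in A$, one must have $\phi((x+y)/2)=(\phi(x)+\phi(y))/2$. Iterating this midpoint property along the lattice $L_n$ (and using that $\phi$ is an isometry) shows that $\phi|_{L_n}$ is $\mathbb{Z}$-affine; extending first to dyadic rationals and then by continuity produces an isometric linear embedding $\iota_n\colon\ell_2^n\hookrightarrow X$.

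The main obstacle is the final synthesis: assembling the family $\{\iota_n\}$ into a single isomorphic embedding $\ell_2\hookrightarrow X$. Having $\ell_2^n\hookrightarrow X$ isometrically for every $n$ is already part of the hypothesis and does not by itself provide a contradiction; what is needed is the compatibility $\iota_{n+1}|_{\ell_2^n}=\iota_n$, which would permit taking a union and closing up to obtain a single isometric embedding of $\ell_2$ into $X$. Securing this compatibility requires $L_n$ and $L_{n+1}$ to share a common substructure in $\ell_2$ rich enough that the unique extensions forced by strict convexity must agree there, while simultaneously keeping $A$ locally finite. Balancing these two competing requirements—enough shared skeleton to force coherence, yet enough sparsity to preserve local finiteness—is the principal technical challenge of the proof.
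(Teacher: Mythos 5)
Your overall strategy is the same as the paper's (a locally finite subset of $\ell_2$ whose isometric image in a strictly convex $X$ is forced to generate nested, nearly Euclidean finite-dimensional subspaces, whence $\ell_2$ embeds isomorphically into $X$), but the proposal stops exactly where the real work lies: the coherence step you yourself label ``the principal technical challenge'' is the entire content of the theorem, and you do not supply a mechanism for it. Without the compatibility $\iota_{n+1}|_{\ell_2^n}=\iota_n$ (or some other form of nestedness), all your construction yields is that $X$ contains, for each $n$, a subspace $(1+\ep_n)$-isomorphic to $\ell_2^n$ --- which is already an immediate consequence of the hypothesis that finite subsets of $\ell_2$ embed isometrically into $X$, and is no contradiction with the non-embeddability of $\ell_2$ (think of $\left(\oplus_n\ell_2^n\right)_p$, $p\ne2$). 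Two further points: (a) a finite piece of a scaled lattice pins down the linear map only on finitely many directions, so the midpoint argument gives a $(1+\ep)$-isomorphic, not isometric, copy of $\ell_2^n$ (harmless, but your ``isometric linear embedding $\iota_n$'' overstates it); (b) the local-finiteness discussion is off target: each $L_n$ is finite whatever $N_n$ is, and $\{0\}\cup\bigcup_n L_n$ fails local finiteness precisely when infinitely many configurations stay in a common bounded set, so the configurations must recede to infinity --- tuning the mesh $N_n$ does not help, and receding to infinity is exactly what makes ``sharing a common skeleton'' between $L_n$ and $L_{n+1}$ impossible as you envisage it.

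The paper resolves this tension without ever forcing affinity. The net $M_n$ (a $\delta(\frac1n,\ell_2^n,X)$-net of a shifted unit ball, with $\delta$ from Lemma \ref{L:FromNet}) is placed inside the convex hull of points $s^n_1,\dots,s^n_{n+1}$ chosen on the \emph{fixed} coordinate rays $R_i=\{\alpha e_i:\alpha\ge0\}$, at distance at least $n$ from the origin; this gives local finiteness. Strict convexity is used only through collinearity (Observation \ref{O:ImLinTup}): since $0$, $s^i_i$ and $s^n_i$ lie on the same ray and $T(0)=0$, the images $Ts^n_i$, $n\ge i$, are multiples of one another, so the spans $L_n=\lin\{Ts^n_1,\dots,Ts^n_n\}$ are automatically nested --- coherence is obtained through linear tuples passing through the origin, not through shared lattice points; and the auxiliary chains $w_1(z),w_2(z),\dots$ added to $M_n$ force, one collinearity at a time, every net point's image into the $n$-dimensional space $L_n$. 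Lemma \ref{L:FromNet} then shows $L_n$ is within Banach--Mazur distance $1+\frac1n$ of $\ell_2^n$, and Joichi's theorem applied to the nested sequence $\{L_n\}$ produces a subspace isomorphic to $\ell_2$, the desired contradiction. To complete your proof you would need an analogue of this ray/linear-tuple device; the ``shared skeleton of lattice points'' idea cannot be reconciled with local finiteness.
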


With the help of Theorem \ref{T:StrConv}, one derives:

\begin{corollary}\label{C:ellp>1} Let $p\in(1,\infty)$, $p\ne 2$. Then every strictly convex Banach space of
the form $X=\left(\left(\oplus_{n=1}^\infty X_n\right)_p\right)$,
where $\{X_n\}_{n=1}^\infty$ is a nested sequence of
finite-dimensional Banach spaces satisfies $D(X)>1$.
\end{corollary}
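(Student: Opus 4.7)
The plan is to derive Corollary \ref{C:ellp>1} as a direct application of Theorem \ref{T:StrConv} to $X = \left(\oplus_{n=1}^\infty X_n\right)_p$. Since $X$ is strictly convex by hypothesis, I need only verify the two remaining ingredients of that theorem: (a) every finite subset of $\ell_2$ admits an isometric embedding into $X$, and (b) $\ell_2$ admits no isomorphic embedding into $X$.

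For (a), my first step is to observe that $X$ contains an isometric copy of $\ell_p$: picking a unit vector $e_n$ in each $X_n$, the closed linear span of $\{e_n\}_{n=1}^\infty$ in $X$ inherits exactly the $\ell_p$-norm by the definition of the $\ell_p$-direct sum. Hence the task reduces to isometrically embedding every finite subset of $\ell_2$ into $\ell_p$, and since any such subset lies inside some $\ell_2^n$, it is enough to show $\ell_2^n \hookrightarrow \ell_p^N$ isometrically for an appropriate $N$. The Gaussian map $x\mapsto c_p^{-1}\sum_{i=1}^n x_i g_i$, with $g_i$ i.i.d.\ standard normals and $c_p$ the $L_p$-norm of a single normal, realizes $\ell_2^n$ isometrically inside $L_p$; a subsequent discretization into $\ell_p^N$ is provided by the Bretagnolle--Dacunha-Castelle--Krivine theorem when $p$ is not an even integer, and by a Gaussian cubature formula matching all joint moments up to order $p$ when $p$ is an even integer.

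For (b), I would invoke the standard fact that, since each $X_n$ is finite-dimensional, every infinite-dimensional closed subspace of $X = \left(\oplus_{n=1}^\infty X_n\right)_p$ contains an isomorphic copy of $\ell_p$. This follows from a Bessaga--Pe{\l}czy{\'n}ski / gliding-hump argument: any normalized basic sequence in such a subspace admits, by finite-dimensionality of each summand, a subsequence arbitrarily close in norm to a sequence disjointly supported with respect to the direct-sum decomposition, and normalized disjointly supported sequences in an $\ell_p$-sum span an isometric copy of $\ell_p$. Consequently, an isomorphic embedding $\ell_2\hookrightarrow X$ would force $\ell_p$ to embed isomorphically into $\ell_2$, contradicting the classical total incomparability of $\ell_p$ and $\ell_2$ for $p\neq 2$ (visible, for instance, via their differing type and cotype).

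With (a), (b), and the strict convexity of $X$ all in place, Theorem \ref{T:StrConv} immediately yields $D(X)>1$. The step I expect to be most delicate is (a), because the argument has to cover uniformly the three subranges $p\in(1,2)$, $p\in(2,\infty)$ non-even, and $p\in\{4,6,8,\dots\}$; the cubature-based justification in the last subrange is the most technical, and one must take care that it yields a strictly isometric, not merely almost-isometric, embedding of $\ell_2^n$ into $\ell_p^N$.
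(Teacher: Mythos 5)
Your overall strategy (verify the hypotheses of Theorem \ref{T:StrConv} for $X$) is the same as the paper's, and your part (b) --- every infinite-dimensional subspace of $\left(\oplus_{n=1}^\infty X_n\right)_p$ contains an isomorphic copy of $\ell_p$, hence no isomorphic copy of $\ell_2$ --- matches the paper's argument. The genuine gap is in part (a). After correctly reducing to placing finite subsets of $\ell_2$ isometrically inside $\ell_p$, you try to prove the much stronger statement that the whole subspace $\ell_2^n$ embeds isometrically (linearly) into $\ell_p^N$. For $p$ not an even integer this is false for every finite $N$, and even for the infinite-dimensional target $\ell_p$: such an embedding amounts to writing $(x_1^2+\cdots+x_n^2)^{p/2}$ as a sum $\sum_i |\langle a_i,x\rangle|^p$, i.e., to an atomic representing measure in the L\'evy representation of the Euclidean norm, whereas for $p\notin 2\mathbb{N}$ the representing measure is unique and is the continuous rotation-invariant measure on the sphere. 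The Bretagnolle--Dacunha-Castelle--Krivine theorem does not supply the claimed discretization: it gives isometric embeddings of $L_q$ into the function space $L_p$ for $p\le q\le 2$, not into $\ell_p^N$ or $\ell_p$. Your cubature argument is correct for even integers $p$, so as written your proof of (a) covers only $p\in\{4,6,8,\dots\}$ and fails for all other $p\ne 2$, including the entire range $1<p<2$.

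The missing ingredient is precisely the distinction between embedding finite point sets and embedding linear subspaces, and it is what the paper uses: by Ball's theorem \cite{Bal90}, every finite subset of $L_p[0,1]$, viewed as a metric space, admits an isometric embedding into $\ell_p$. Combining this with the Gaussian isometric embedding of $\ell_2$ into $L_p[0,1]$ (which you already have), every finite subset of $\ell_2$ embeds isometrically into $\ell_p$, and hence into $X$ via the isometric copy of $\ell_p$ spanned by disjointly supported unit vectors that you constructed. With this replacement, the rest of your argument goes through and coincides with the paper's proof.
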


Combining Theorem \ref{T:Above} and Corollary \ref{C:ellp>1} one
obtains:

\begin{corollary}\label{C:ellp=1+} Let $p\in(1,\infty)$, $p\ne 2$, and let $\{X_n\}_{n=1}^\infty$ be a nested family of finite
dimensional strictly convex Banach spaces. Then, the space
$\displaystyle{X=\left(\oplus_{n=1}^\infty X_n\right)_p}$
satisfies $D(X)= 1^+$. The equality $D(\ell_p)=1^+$ for
$p\in(1,\infty),~p\ne 2$, follows as a special case of this
result.
\end{corollary}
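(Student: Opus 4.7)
The proof strategy is to combine the upper bound from Theorem \ref{T:Above} with the lower bound from Corollary \ref{C:ellp>1}, so the whole task reduces to verifying that Corollary \ref{C:ellp>1} is applicable; in other words, one has to check that $X=(\oplus_{n=1}^\infty X_n)_p$ is itself strictly convex whenever each summand $X_n$ is strictly convex and $p\in(1,\infty)$. Once this is known, Theorem \ref{T:Above} yields $D(X)\le 1^+$, Corollary \ref{C:ellp>1} yields $D(X)>1$, and the two statements together force $D(X)=1^+$.

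The verification of strict convexity of $X=(\oplus X_n)_p$ is standard and hinges on the equality case of Minkowski's inequality. Given $x=(x_n)$ and $y=(y_n)$ in $X$ with $\|x\|=\|y\|=1$ and $\|x+y\|=2$, one first writes, using the triangle inequality in $X_n$ and Minkowski in $\ell_p$,
\[
\|x+y\|=\Big(\sum_n\|x_n+y_n\|^p\Big)^{1/p}\le\Big(\sum_n(\|x_n\|+\|y_n\|)^p\Big)^{1/p}\le\|x\|+\|y\|=2.
\]
Equality throughout, combined with the Minkowski equality conditions for $p\in(1,\infty)$, forces $\|x_n\|=\|y_n\|$ for every $n$ and then $\|x_n+y_n\|=\|x_n\|+\|y_n\|$ in each $X_n$. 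The strict convexity of $X_n$ now implies $x_n=y_n$ for each $n$, so $x=y$; hence $X$ is strictly convex. This is the only nontrivial verification, but it is routine, so no substantial obstacle is expected.

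For the special case $D(\ell_p)=1^+$, the plan is to exhibit $\ell_p$ as such a direct sum. Take $X_n$ to be the $n$-dimensional $\ell_p$-space $\ell_p^n$, which is finite-dimensional and, for $p\in(1,\infty)$, strictly convex; under the natural embedding of $\ell_p^n$ into $\ell_p^{n+1}$ (first $n$ coordinates), the family $\{X_n\}$ is nested. The $\ell_p$-direct sum $(\oplus_n \ell_p^n)_p$ is isometric to $\ell_p$ (the resulting coordinate system is countable and the norm is $\ell_p$), so the first part of the corollary applies and yields $D(\ell_p)=1^+$. The whole argument therefore amounts to: citing Theorem \ref{T:Above}, checking strict convexity via Minkowski equality, citing Corollary \ref{C:ellp>1}, and identifying $\ell_p$ with a concrete nested $\ell_p$-sum of finite-dimensional strictly convex spaces.
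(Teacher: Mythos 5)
Your proposal is correct and follows essentially the same route as the paper: Corollary \ref{C:ellp=1+} is obtained there simply by combining Theorem \ref{T:Above} with Corollary \ref{C:ellp>1}, exactly as you do. Your explicit verification that the $\ell_p$-sum of strictly convex finite-dimensional spaces is strictly convex (via the Minkowski equality case) and the identification of $\ell_p$ with $\left(\oplus_{n}\ell_p^n\right)_p$ are just the routine details the paper leaves implicit, and you handle both correctly.
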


The case $p=1$ is quite different because $\ell_1$ is not strictly
convex. This case is examined in Section \ref{S:p=1}, where we
prove:

\begin{theorem}\label{T:ell1>1} $D(\ell_1)>1$.
\end{theorem}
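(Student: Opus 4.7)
The plan is to exhibit a locally finite metric space $A$ and a constant $C\ge 1$ such that every finite subset of $A$ admits a $C$-bilipschitz embedding into $\ell_1$ while $A$ itself does not; this directly witnesses $D(\ell_1)>1$. Since $\ell_1$ is not strictly convex, Theorem~\ref{T:StrConv} is unavailable and the midpoint-rigidity of the strictly convex case has to be replaced by something exploiting the special structure of $\ell_1$ alone.

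My first attempt would take $C=1$ (so finite subsets embed isometrically) and choose $A$ to be a sparse locally finite subset of an infinite-dimensional Hilbert space: for instance $A=\{0\}\cup\{n\,e_n:n\ge 1\}\subset\ell_2$, or the snowflaked integers $(\mathbb{N},\sqrt{|m-n|})$, which sits isometrically in $\ell_2$ via $n\mapsto e_1+\cdots+e_n$. Local finiteness is immediate from the divergence of mutual distances, and because every finite subset lies in a finite-dimensional Hilbert space, Schoenberg's theorem together with the Bretagnolle--Dacunha-Castelle--Krivine theorem ensures that each finite subset embeds isometrically into $\ell_1^N$ for $N$ sufficiently large, hence into $\ell_1$.

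The main step is to rule out an isometric embedding $\phi:A\to\ell_1$. I would exploit the cut-cone representation of $\ell_1$-isometries: such $\phi$ corresponds to countably many cuts $S_k\subseteq A$ with weights $w_k>0$ satisfying $d(x,y)=\sum_k w_k\,|\mathbf{1}_{S_k}(x)-\mathbf{1}_{S_k}(y)|$. The aim is to show that the Hilbertian distances among the ``nearly orthogonal'' vertices of $A$ cannot be realized by a purely atomic cut decomposition. After appropriate normalization and extraction of a subsequence of cuts, one expects to assemble a bounded linear operator $\ell_2\to\ell_1$, contradicting the classical fact that $\ell_2$ is not isomorphic to any subspace of $\ell_1$ (by cotype, or by Schur's property).

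The main obstacle is precisely this last conversion. In the strictly convex case, uniqueness of geodesic midpoints propagates structure automatically; for $\ell_1$ there are many midpoints and no such tool. Instead, the argument must leverage the discreteness of the $\ell_1$-basis, most likely via a compactness step using Schur's property to replace weak compactness, so as to force the cut weights $w_k$ to accumulate in an $\ell_2$-compatible pattern and thereby produce the forbidden isomorphic embedding $\ell_2\hookrightarrow\ell_1$. Handling this limiting procedure carefully on a locally finite space is what distinguishes the $p=1$ proof from the strictly convex $p>1$ case.
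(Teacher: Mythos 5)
Your first half is fine: for sets such as $\{0\}\cup\{n e_n\}$ or $(\mathbb{N},\sqrt{|m-n|})$, every finite subset is a finite subset of Hilbert space, hence of negative type, hence isometric to a subset of $L_1$, and Ball's theorem then puts it isometrically into $\ell_1^N$. The genuine gap is the entire second half: you never prove that the whole space fails to embed isometrically into $\ell_1$, and the sketched route cannot work as stated. Both of your candidate spaces do embed isometrically into $L_1(-\infty,\infty)$, so the only possible obstruction is the atomicity of $\ell_1$; but from an isometric embedding of such a sparse set there is no mechanism for producing a linear (or isomorphic) copy of $\ell_2$ inside $\ell_1$. Linearization tools --- Bourgain's discretization as in Lemma~\ref{L:FromNet}, or differentiation/ultrapower arguments --- require fine nets of Euclidean balls of growing dimension, which your sets do not contain; and even if you enrich the set with such nets as in the proof of Theorem~\ref{T:StrConv}, the failure of strict convexity of $\ell_1$ means an isometry need not carry the net of $y_n+B_{\ell_2^n}$ into an $n$-dimensional subspace, so the ``forbidden subspace'' obstruction collapses --- this is exactly why the paper treats $p=1$ separately. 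For your specific sparse sets it is not even clear that an isometric embedding into $\ell_1$ fails to exist, so the construction as proposed does not witness $D(\ell_1)>1$. (A side remark: ``by cotype'' is not a valid reason that $\ell_2$ does not embed isomorphically into $\ell_1$, since both spaces have cotype $2$; the Schur property, or the fact that every infinite-dimensional subspace of $\ell_1$ contains an isomorphic copy of $\ell_1$, is the correct reason.)

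For contrast, the paper's proof works inside $L_1(-\infty,\infty)$: it takes $0$, the indicator $d=\1_{(0,1]}$, and the perturbed dyadic indicators $f_\sigma=d_\sigma+\ell(\sigma)\cdot\1_{(\Psi(\sigma),\Psi(\sigma)+1]}$, where the far-away bumps force local finiteness. Finite subsets embed isometrically into $\ell_1$ by Ball's theorem, while an isometric embedding $F$ of the whole set with $F(0)=0$ would force $x=Fd$ to admit, for every $n$, a decomposition into $2^n$ disjointly supported vectors of norm $2^{-n}$ (this is extracted from the equalities $\|d-f_\sigma\|=\|d\|+\|f_\sigma\|-2\cdot 2^{-n}$ and the pairwise disjointness of the $f_\sigma$ with $\ell(\sigma)=n$); that is impossible in the atomic space $\ell_1$ because the largest coordinate of $x$ exceeds $2^{-n}$ for large $n$. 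If you want to salvage your plan, you need an obstruction of this disjointness/atomic-splitting type, not a linear-subspace obstruction, and a set designed so that an isometry is forced to reproduce prescribed exact equalities of $\ell_1$-norms.
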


Juxtaposing this outcome with Theorem \ref{T:Above}, we reach:

\begin{corollary}\label{C:ell1=1+} $D(\ell_1)=1^+$.
\end{corollary}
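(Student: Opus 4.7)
The plan is straightforward: the corollary is a formal combination of the upper bound provided by Theorem \ref{T:Above} and the lower bound provided by Theorem \ref{T:ell1>1}, reconciled through the bookkeeping of the symbol $1^+$ introduced in the definition of $D(X)$.

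First I would produce the upper bound $D(\ell_1)\le 1^+$. To invoke Theorem \ref{T:Above} with $p=1$, I need to realise $\ell_1$ as $(\oplus_{n=1}^\infty X_n)_1$ for some nested family of finite-dimensional Banach spaces. The natural choice is $X_n=\ell_1^n$, where $\ell_1^n$ is identified with the subspace of $\ell_1^{n+1}$ consisting of vectors whose last coordinate is zero; this makes $\{X_n\}$ a properly nested family, and the $\ell_1$-direct sum of this family is (isometrically) $\ell_1$. Theorem \ref{T:Above} applied to this family then yields $D(\ell_1)\le 1^+$, i.e., for every $\ep>0$ every locally finite metric space whose finite subsets embed into $\ell_1$ with distortion $\le C$ admits a bilipschitz embedding into $\ell_1$ with distortion $\le (1+\ep)C$.

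Next, Theorem \ref{T:ell1>1} asserts $D(\ell_1)>1$, which by the definitions given just after Theorem \ref{T:FDwithD} is exactly the statement that the inequality $D(\ell_1)\le 1$ fails; equivalently, there exists a locally finite metric space all of whose finite subsets embed isometrically into $\ell_1$ but which itself does not embed isometrically into $\ell_1$.

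Finally I would combine the two: by the upper bound, for every $\ep>0$ the inequality $D(\ell_1)\le 1+\ep$ holds, while by the lower bound the inequality $D(\ell_1)\le 1$ does not. By the third bullet in the definition of $D(X)$ this is precisely the meaning of $D(\ell_1)=1^+$, completing the proof. There is no real obstacle here; the only point that requires any care is verifying that the representation of $\ell_1$ as an $\ell_1$-sum of a nested family of finite-dimensional spaces satisfies the hypothesis of Theorem \ref{T:Above} verbatim, which the choice $X_n=\ell_1^n$ handles immediately.
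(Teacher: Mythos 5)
Your proof is correct and follows exactly the paper's route: the corollary is obtained by juxtaposing the lower bound $D(\ell_1)>1$ of Theorem \ref{T:ell1>1} with the upper bound $D(\ell_1)\le 1^+$ coming from Theorem \ref{T:Above} (instantiated, as you do, with the nested family $X_n=\ell_1^n$, which is precisely Corollary \ref{C:Ell_p}), and then reading off the definition of $D(X)=1^+$. The only caveat, immaterial here, is your parenthetical ``equivalently'': the failure of $D(\ell_1)\le 1$ is implied by, but not formally equivalent to, the existence of an isometric counterexample, since the failure could a priori occur at some distortion level $C>1$.
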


\begin{remark} It should be mentioned that the above results are
not the first known ones claiming $D(X)>1$. Before now, results of
this kind were obtained in \cite[Theorem 2.9]{KL08} and
\cite[Theorem 1.12]{OO18+} for some other Banach spaces and their
classes.
\end{remark}

\section{Proof of Theorem \ref{T:StrConv}}\label{S:p>1}

Prior to presenting the proof of Theorem \ref{T:StrConv}, let us
provide some auxiliary information. By developing the notion of a
linear triple \cite[p.~56]{Blu53}, we introduce the following:

\begin{definition}\label{D:LinearnTup} A collection $r=\{r_i\}_{i=1}^n$ of points in a metric space $(A,d_A)$ is called a {\it linear
tuple} if the sequence $\{d_A(r_i,r_1)\}_{i=1}^n$ is strictly
increasing and, for $1\le i<j<k\le n$, the equality below holds:
\begin{equation}\label{E:TriangleEq}
d_A(r_i,r_k)=d_A(r_i,r_j)+d_A(r_j,r_k).
\end{equation}
\end{definition}

It is not difficult to see that, for strictly convex Banach
spaces, the following statement is valid.

\begin{observation}\label{O:ImLinTup} An isometric image of a linear tuple $r=\{r_i\}_{i=1}^n$ in a strictly convex Banach space is contained in the line segment joining
the images of $r_1$ and $r_n$.
\end{observation}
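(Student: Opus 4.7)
The plan is to reduce the statement to the equality case of the triangle inequality in a strictly convex Banach space $X$, namely the well-known fact that if $a, b \in X \setminus \{0\}$ satisfy $\|a + b\| = \|a\| + \|b\|$, then $b = \lambda a$ for some $\lambda > 0$. Granted this fact, the observation follows by applying it to the vectors $a = f(r_j) - f(r_1)$ and $b = f(r_n) - f(r_j)$ for each intermediate index $j$, where $f$ denotes the isometric embedding into $X$.

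First, I would use the condition of Definition \ref{D:LinearnTup} with the specific choice $i = 1$, $k = n$, and $j \in \{2, \dots, n-1\}$. Since $\{d_A(r_i, r_1)\}_{i=1}^n$ is strictly increasing, we have $d_A(r_1, r_j) > 0$ and, rearranging the equality $d_A(r_1, r_n) = d_A(r_1, r_j) + d_A(r_j, r_n)$ with the strict inequality $d_A(r_1, r_n) > d_A(r_1, r_j)$, we also get $d_A(r_j, r_n) > 0$. Hence the images $f(r_j) - f(r_1)$ and $f(r_n) - f(r_j)$ are both nonzero.

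Next, since $f$ is isometric, the linear-tuple equation \eqref{E:TriangleEq} translates into
\[
\|f(r_n) - f(r_1)\| = \|f(r_j) - f(r_1)\| + \|f(r_n) - f(r_j)\|,
\]
which is exactly the equality case of the triangle inequality applied to $a = f(r_j) - f(r_1)$ and $b = f(r_n) - f(r_j)$. Strict convexity of $X$ then yields a positive scalar $\lambda_j$ with $f(r_n) - f(r_j) = \lambda_j (f(r_j) - f(r_1))$, or equivalently
\[
f(r_j) = \frac{1}{1 + \lambda_j}\, f(r_n) + \frac{\lambda_j}{1 + \lambda_j}\, f(r_1),
\]
so $f(r_j)$ lies on the closed segment joining $f(r_1)$ and $f(r_n)$. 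Since $j$ was arbitrary in $\{2, \dots, n-1\}$, and the endpoints $f(r_1), f(r_n)$ trivially lie on the segment, the conclusion follows.

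There is essentially no obstacle here; the only delicate point is the non-degeneracy verification that guarantees both vectors $a$ and $b$ are nonzero, so that the strict-convexity principle can legitimately be invoked. That, however, is an immediate consequence of the strict monotonicity built into Definition \ref{D:LinearnTup}.
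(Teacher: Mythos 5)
Your proof is correct: the paper states this observation without proof (``it is not difficult to see''), and your argument via the equality case of the triangle inequality in strictly convex spaces, applied to $a=f(r_j)-f(r_1)$ and $b=f(r_n)-f(r_j)$ with the triple $i=1<j<k=n$ of \eqref{E:TriangleEq}, is exactly the intended standard argument. The nondegeneracy check ensuring $a,b\ne 0$ via the strict monotonicity of $\{d_A(r_i,r_1)\}_{i=1}^n$ is the right (and only) delicate point, and you handled it correctly.
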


For the sequel, the next fact  is needed (by $B_Z$ we denote the
unit ball of a Banach space $Z$):

\begin{lemma}\label{L:FromNet} Let $Z$ be a finite-dimensional Banach space and $F$ be a Banach space. Then, for each $\ep>0,$ there exists
$\delta=\delta(\ep,Z,F)>0$ such that if a $\delta$-net in $B_Z$
admits an isometric embedding into  $F$, then $F$ contains a
subspace whose Banach-Mazur distance to $Z$ does not exceed
$(1+\ep)$.
\end{lemma}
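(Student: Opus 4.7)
The plan is to argue by contradiction, using an ultrapower of $F$ to pass from the hypothesis about $\delta$-nets to a genuine isometric embedding of $Z$ itself into an ultrapower of $F$.

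Suppose the statement fails: there exists $\ep_0 > 0$ such that for every $k \in \mathbb{N}$ some $(1/k)$-net $N_k \subset B_Z$ admits an isometric embedding $\varphi_k : N_k \to F$, yet no $n$-dimensional subspace of $F$ (where $n = \dim Z$) has Banach--Mazur distance at most $1 + \ep_0$ to $Z$. By translating, assume each $\varphi_k$ sends a point of $N_k$ closest to $0 \in B_Z$ to $0 \in F$. Fix a nonprincipal ultrafilter $\mathcal{U}$ on $\mathbb{N}$ and consider the ultrapower $F_{\mathcal{U}}$. For each $z \in Z$, pick $u_z^k$ within distance $1/k$ of $z$ in an appropriately rescaled copy of $N_k$ (since each $\varphi_k$ extends isometrically by scaling to $rN_k$ for any $r>0$), and set $\Phi(z) = [\varphi_k(u_z^k)]_{\mathcal{U}} \in F_{\mathcal{U}}$. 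The denseness $u_z^k \to z$ together with the isometricity of every $\varphi_k$ yields an isometric (possibly non-linear) embedding $\Phi : Z \to F_{\mathcal{U}}$ of metric spaces with $\Phi(0) = 0$.

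The crucial step is to upgrade $\Phi$ to a linear almost-isometric embedding. Figiel's theorem furnishes a norm-one linear projection $P : \overline{\mathrm{span}}\,\Phi(Z) \to Z$ satisfying $P \circ \Phi = \mathrm{id}_Z$, realizing $Z$ as an isometric quotient of a closed subspace $V \subset F_{\mathcal{U}}$. Since $Z$ is finite-dimensional, for any prescribed $\eta > 0$ a near-isometric linear section $s : Z \to V$ with $\|s\| \leq 1 + \eta$ can then be produced---for instance via the principle of local reflexivity applied to the quotient structure, or by invoking Godefroy--Kalton-type linearization results. The image $s(Z) \subset V \subset F_{\mathcal{U}}$ is an $n$-dimensional subspace whose Banach--Mazur distance to $Z$ is at most $1 + \eta$.

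Finally, because $F_{\mathcal{U}}$ is finitely representable in $F$ (a standard property of ultrapowers), there exists an $n$-dimensional subspace $W \subset F$ at Banach--Mazur distance at most $1 + \eta$ from $s(Z)$, and hence at most $(1 + \eta)^2$ from $Z$. Choosing $\eta$ small enough that $(1 + \eta)^2 < 1 + \ep_0$ contradicts our standing assumption, completing the argument. The principal obstacle is the linearization step in the preceding paragraph: although $\Phi$ preserves all distances and fixes the origin, it is typically non-affine in a general Banach space, so extracting a linear almost-isometric copy of $Z$ from $\Phi$ requires nontrivial Banach-space machinery---either Figiel's projection theorem combined with a finite-dimensional lifting via local reflexivity, or a direct Godefroy--Kalton-style linearization.
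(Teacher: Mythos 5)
Your reduction to an ultrapower is fine in spirit (after one repair: for each $k$ you must fix a \emph{single} rescaling $r_k$ with $r_k\to\infty$ and $r_k/k\to 0$, and take all points $u^k_z$ from the one isometrically embedded set $r_kN_k$; distances between points coming from differently rescaled copies of $N_k$ are not controlled by the hypothesis). The genuine gap is the linearization step. Figiel's theorem does give a norm-one $P$ with $P\circ\Phi=\mathrm{id}_Z$, but from the resulting \emph{quotient} structure alone one cannot produce a linear section $s$ with $\|s\|\le 1+\eta$, and the principle of local reflexivity does not do this: local reflexivity moves finite-dimensional subspaces of a bidual down into the space; it says nothing about almost-isometric liftings through quotient maps. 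Indeed the claimed implication ``$Z$ finite-dimensional isometric quotient of $V$ $\Rightarrow$ $(1+\eta)$-sections exist'' is false: every separable space, e.g.\ $\ell_\infty^3$, is an isometric quotient of $\ell_1$, yet $\ell_\infty^3$ is not $(1+\eta)$-isomorphic to any subspace of $\ell_1$ for small $\eta$ (its dual ball is not a zonoid, so it does not embed almost isometrically into $L_1$). What must be used is precisely the extra datum you discard at this point, namely the nonlinear isometry $\Phi$ itself: either quote the Godefroy--Kalton theorem (a separable Banach space that embeds isometrically into $Y$ embeds \emph{linearly} isometrically into $Y$), which makes Figiel's projection and the quotient discussion unnecessary, or run the Ribe / Heinrich--Mankiewicz differentiation argument (w$^*$-G\^ateaux differentiate $\Phi$ into $(F_{\mathcal U})^{**}$, check the derivative is still an into-isometry --- this lower bound is the nontrivial point --- then use local reflexivity and finite representability). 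As written, you only name these tools as options, and the one mechanism you describe concretely does not work, so the central step is missing.

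For comparison, the paper does not argue from scratch at all: it observes that the lemma is an immediate consequence of Bourgain's discretization theorem, which is stronger in two ways --- it gives an explicit $\delta(\ep,\dim Z)$ and it handles nets embedded with distortion, not only isometrically --- and it notes that the mere existence of $\delta(\ep,Z,F)$ already follows from the older results of Ribe and Heinrich--Mankiewicz. Your ultrapower-plus-linearization outline is essentially that older qualitative route, so the cleanest repair is to carry out (or cite) the Heinrich--Mankiewicz differentiation step or Godefroy--Kalton linearization honestly, rather than appeal to local reflexivity on the quotient.
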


 Lemma \ref{L:FromNet} is an immediate consequence
of Bourgain's discretization theorem \cite{Bou87}. It should be
emphasized that this theorem provides a  much stronger claim
because Bourgain found an explicit estimate for $\delta$ as a
function of $\ep$ and the dimension of $Z$; besides in Bourgain's
theorem, the distortion of embedding of $Z$ is estimated in terms
of distortion of embedding of a $\delta$-net of $B_Z$. See
\cite{Beg99,GNS12} for simplifications of Bourgain's proof, see
also its presentation in \cite[Section 9.2]{Ost13}. Meanwhile, the
existence of $\delta(\ep,Z,F)$ can be derived from earlier results
of Ribe \cite{Rib76} and Heinrich and Mankiewicz \cite{HM82}, see
\cite[p.~818]{GNS12}.

\begin{proof}[Proof of Theorem \ref{T:StrConv}]
Denote the unit vector basis of $\ell_2$ by
$\{e_i\}_{i=1}^\infty$. Our intention  is to find a locally finite
subset $M$ of $\ell_2$ in such a way  that:
\medskip

{\bf (A)} $M$ contains a $\delta(\frac 1n, \ell_2^n, X)$-net $M_n$ of a shifted unit ball $y_n+B_{\ell_2^n}$, $n\in \mathbb{N}$.
\medskip

{\bf (B)} There exists a sequence $\{\alpha_i\}_{i=1}^\infty$ of
positive numbers, such that if $T:M\to X$ is an isometry
satisfying $T(0)=0$, then the image of $T(M_n)$ is contained in
the linear span of $\{T(\alpha_1e_1),\dots,T(\alpha_n e_n)\}$.
\medskip

The existence of such a set $M$ will prove Theorem \ref{T:StrConv}
because, by the assumption of the theorem, finite subsets of $M$
admit isometric embeddings into $X$. On the other hand, $M$ itself
does not admit an isometric embedding into $X$. In fact, such an
embedding $T$ could be assumed to satisfy $T(0)=0$. Therefore
conditions {\bf (A)} and {\bf (B)}, combined with Lemma
\ref{L:FromNet}, would imply that the Banach-Mazur distance
between the linear span of $\{T(\alpha_1e_1),\dots,T(\alpha_n
e_n)\}$ and $\ell_2^n$ does not exceed $\left(1+\frac1n\right)$.
It is well known (see \cite{Joi66}) that, in this case, $X$
contains a subspace isomorphic to $\ell_2$, which is a
contradiction.
\medskip

We let
\[M=\left(\bigcup_{n=1}^\infty
M_n\right)\bigcup \{0\},\] where $M_n$ are finite sets constructed
in the way described hereinafter.

Denote by $R_i, i\in \mathbb{N}$,  the positive rays generated by
$e_i$, that is, $R_i=\{\alpha e_i:~\alpha\ge 0\}$. Let $M_1$ be
the $\delta(1, \ell_2^1, X)$-net in the line segment $[0,2e_1]$,
where we assume that $M_1$ includes $e_1$. It is clear that $M_1$
satisfies \textbf{(A)}.

For $n>1$ sets $\{M_n\}_{n=1}^\infty$ will be constructed
inductively. Suppose that we have already created
$M_1,\dots,M_{n-1}$. To construct $M_n$, we pick points $s^n_i\in
R_i$, $1\leq i\leq n$, and one more point, $s^n_{n+1}\in R_n$ - so
that $R_n$ contains both $s^n_n$ and $s^n_{n+1}$ - in such a way
that $\conv(\{s^n_i\}_{i=1}^{n+1})$ is at distance at least $n$
from the origin, and $\conv(\{s^n_i\}_{i=1}^{n+1})$ contains a
shift $y_n+B_{\ell_2^n}$ of the unit ball (for some $y_n$). This
is clearly possible. Next, we select a $\delta(\frac1n, \ell_2^n,
X)$-net $\mathcal{N}_n$ in this shifted unit ball
$y_n+B_{\ell_2^n}$  and include it in $M_n$ together with
$\{s^n_i\}_{i=1}^{n+1}$. At this point, it is evident that
condition {\bf (A)} is satisfied.

To ensure that condition {\bf (B)} is also satisfied - as it will
be seen later - we add,  for each element $z\in \mathcal{N}_n$,
finitely many additional elements of
$\conv(\{s^n_i\}_{i=1}^{n+1})$ to $M_n$ according to the procedure
suggested below:

\begin{itemize}

\item If $z\in \{s^n_i\}_{i=1}^{n+1}$, there is nothing to
include. If $z\notin \{s^n_i\}_{i=1}^{n+1}$, we find and include
in $M_n$ an element $w_1(z)$ in a convex hull of an $n$-element
subset $W_1(z)$ of $\{s^n_i\}_{i=1}^{n+1}$ with $z$ being on the
line segment joining $w_1(z)$ and $s_i^n\in
\left(\{s^n_i\}_{i=1}^{n+1}\backslash W_1(z)\right)$.

\item If $w_1(z)\in \{s^n_i\}_{i=1}^{n+1}$, there is nothing else
to include. If $w_1(z)\notin \{s^n_i\}_{i=1}^{n+1}$, we find and
include in $M_n$ an element $w_2(z)$ in a convex hull of an
$(n-1)$-element subset $W_2(z)$ of $\{s^n_i\}_{i=1}^{n+1}$ such
that $w_1(z)$ is on the line segment joining $w_2(z)$ and
$s_i^n\in \left(\{s^n_i\}_{i=1}^{n+1}\backslash W_2(z)\right)$.

\item We continue in an obvious way.

\item If we do not terminate the process in one of the previous
steps, we arrive at the situation when $w_n(z)$ is in a convex
hull of a $2$-element subset of $\{s^n_i\}_{i=1}^{n+1}$, and hence
it is on some line segment of the  form $[s^n_i,s^n_j]$. At this
point we stop.

\end{itemize}

It has already been stated   that condition {\bf (A)} is satisfied
for $M$. Now, let us verify condition {\bf (B)}. To do this, it
suffices to prove that, for each isometry $T:(M_n\cup\{0\})\to X$
satisfying $T(0)=0$, the image $T(M_n)$ is contained in the linear
span of $\{Ts^n_1,\dots,Ts^n_n\}$. This condition looks slightly
different from the one in {\bf (B)}. However, defining
$\{\alpha_i\}_{i=1}^\infty$ by $\alpha_1=1$ and
$\alpha_ie_i=s^i_i$ one can see that in essence the conditions are
equivalent because, by Observation \ref{O:ImLinTup}, the images
$\{Ts^n_i\}_{n=i}^\infty$ are multiples of each other.
\medskip

To show that $T(M_n)$ is contained in the linear span $L$ of
$\{Ts^n_1,\dots,Ts^n_n\}$, the procedure outlined underneath is
applied, where in each step Observation \ref{O:ImLinTup} is used.

\begin{itemize}

\item Since $0, s^n_n$, and $s^n_{n+1}$ form a linear tuple, and
$T(0)=0$, we have $T(s^n_{n+1})\in L$.

\item Whenever  $w_n(z)$ is defined, one has $Tw_n(z)\in L$
because $w_n(z)\in [s^n_i,s^n_j]$.

\item Likewise, for each $z$ such that $w_{n-1}(z)$ is defined,
one obtains $Tw_{n-1}(z)\in L$ since  $w_{n-1}(z)$ is in the line
segment joining $w_n(z)$ and one of $s^n_i$.

\item We proceed in a straightforward  way till we get $Tz\in L$.

\end{itemize}

In addition, it is easy to see that the assumption that
$\conv(\{s^n_i\}_{i=1}^{n+1})$ is at distance at least $n$ from
the origin together with the fact that each set $M_n$ is finite
and is contained in  $\conv(\{s^n_i\}_{i=1}^{n+1})$, implies that
the set $\cup_{n=1}^\infty M_n$ is locally finite.
\end{proof}

\begin{proof}[Proof of Corollary \ref{C:ellp>1}] To check that this $X$ satisfies the conditions of
Theorem \ref{T:StrConv}  the two well-known facts come in
handy:\medskip

\noindent{\bf (1)} Each finite subset of $L_p[0,1]$ admits an
isometric embedding into $\ell_p$, see \cite{Bal90}.

\noindent{\bf (2)} The space $L_p[0,1]$ contains a subspace
isometric to $\ell_2$, see \cite[p.~16]{JL01}.
\medskip

Combining {\bf (1)} and {\bf (2)} we conclude  that all finite
subsets of $\ell_2$ are isometric to subsets of $\ell_p$, and,
thence, to subsets of $X$. On the other hand, it is known that
each infinite-dimensional subspace of $X$ contains a subspace
isomorphic to $\ell_p$ (this can be done using a slight variation
of the argument used to prove \cite[Proposition 2.a.2]{LT77}), and
as such it is not isomorphic to $\ell_2$.
\end{proof}

\begin{remark} The first part of the proof of Theorem
\ref{T:StrConv} demonstrates that its statement can be
strengthened by replacing the condition ``$\ell_2$ does not admit
an isomorphic embedding into $X$'' by ``there is $\alpha>1$ such
that $X$ does not contain a subspace whose Banach-Mazur distance
to $\ell_2$ does not exceed $\alpha$''. It is known \cite{OS94}
that the latter condition is strictly weaker. In addition, it is
not difficult to see that although Joichi did not formally state
the pertinent modification of the main result of \cite{Joi66}, it
arises from the proof.
\end{remark}

\section{The case of $\ell_1$}\label{S:p=1}

\begin{proof}[Proof of Theorem \ref{T:ell1>1}]
Recall \cite{Bal90} that each finite subset of
$L_1(-\infty,\infty)$  admits an isometric embedding into
$\ell_1$. To prove Theorem \ref{T:ell1>1} we construct in
$L_1(-\infty,\infty)$ a locally finite metric space $M$ such that
its isometric embeddability into $\ell_1$ would imply that
$\ell_1$ contains a unit vector $x$ which, for every
$n\in\mathbb{N}$,  can be represented as a sum of $2^n$ vectors
with pairwise disjoint supports and of norm $2^{-n}$ each. This
leads to a contradiction: consider the maximal in absolute value
coordinate of the vector $x$, let it be $\alpha$. If, for some
$n\in\mathbb{N},\;|\alpha|>2^{-n}$, it is clearly impossible to
partition vector into $2^n$ vectors of norm $2^{-n}$ each with
pairwise disjoint supports.
\medskip

The starting point of the construction is the fact that the
indicator function $\1_{(0,1]}$ has, for each $n\in\mathbb{N}$, a
representation as a sum of $2^n$ pairwise disjoint vectors of norm
$2^{-n}$. To be specific, we adopt the writing:

\begin{itemize}

\item $\1_{(0,1]}=d_0+d_1$, where $d_0=\1_{(0,\frac12]},
d_1=\1_{(\frac12,1]}$

\item $\1_{(0,1]}=d_{00}+d_{01}+d_{10}+d_{11}$, where
$d_{00}=\1_{(0,\frac14]}, d_{01}=\1_{(\frac14,\frac12]},
d_{10}=\1_{(\frac12,\frac34]}, d_{11}=\1_{(\frac34,1]}$

\item We carry on  in an obvious way.

\end{itemize}

In the sequel,  the following notation will be employed: let
$d=\1_{(0,1]}$ and denote the functions introduced above by
$d_\sigma$, where $\sigma$ is a finite string of $0$'s and $1$'s.
Denote by $\ell(\sigma)$ the length of the string $\sigma$. For
each $\sigma=\{\sigma_i\}_{i=1}^{\ell(\sigma)}$, the subinterval
$I(\sigma)$ of $(0,1]$ is defined  by:
\[I(\sigma)=\left(\sum_{i=1}^{\ell(\sigma)}\sigma_i2^{-i},~2^{-\ell(\sigma)}+\sum_{i=1}^{\ell(\sigma)}\sigma_i2^{-i}\right].
\]

With this notation $d_\sigma=\1_{I(\sigma)}$ and the mentioned
above representation of $\1_{(0,1]}$ as a sum of $2^n$ terms can
be written as:
\[d=\sum_{\sigma,~ \ell(\sigma)=n}d_\sigma,\]
where the summands are disjointly supported. Now, denote by
$\mathcal{T}$ the set of all finite strings of $0$'s and $1$'s. It
is obvious  that $\{d_\sigma\}_{\sigma\in \mathcal{T}}$ is not a
locally finite set. Nonetheless, we can add to $\{d_\sigma\}$
pairwise disjoint functions in such a way that  a locally finite
subset of $L_1(-\infty,\infty)$ will be obtained, and the
existence of an isometric embedding of this set into $\ell_1$
would imply the existence in $\ell_1$ of a vector $x$ with the
properties described at the beginning of the proof.

First, opt for  an injective map $\Psi$ from the collection of all
finite strings of $0$'s and $1$'s into
$\mathbb{Z}\backslash\{0\}$.
\medskip

Now, we consider the locally finite set $M$ satisfying the
conditions: It contains both functions $d$ and $0$, and, in
addition, it includes  all sums
$f_\sigma:=d_{\sigma}+\ell(\sigma)\cdot
\1_{(\Psi(\sigma),\Psi(\sigma)+1]}$, where $\sigma\in\mathcal{T}$.

Since $\ell(\sigma)$ is less than any fixed constant only for
finitely many strings $\sigma$, this set is a locally finite
subset of $L_1(-\infty,\infty)$. It has to  be shown that
isometric embeddability of this set into $\ell_1$ implies the
existence in $\ell_1$ of a vector $x$ with the properties stated
in the first paragraph of the proof, thus resulting in a
contradiction.
\medskip

Indeed, if there is an isometric embedding of $M$ into $\ell_1$,
then there is an isometric embedding $F$ which maps $0$ to $0$ and
- as it will be
 proved - in such a case $x=Fd$ is the desired
vector. More elaborately put, the existence of such an isometric
embedding implies  that there exist vectors
$\{x_\sigma\}_{\sigma\in\mathcal{T}}$ so that, for each
$n\in\mathbb{N}$,  the vectors $\{x_\sigma\}_{\ell(\sigma)=n}$ are
disjointly supported, have norm $2^{-n}$, and
\[x=Fd=\sum_{\sigma,~\ell(\sigma)=n}x_\sigma.\]
\medskip

Each element $a=\sum_{i=1}^\infty a_ie_i$ of $\ell_1$ can be
considered as a possibly infinite union of intervals in the
coordinate plane which join  $(i,0)$ and $(i,a_i)$. The total length of
all intervals is equal to $||a||$.
\medskip

The proposed  construction guarantees that if $\ell(\sigma)=n$,
then $||f_\sigma-d||=||f_\sigma||+||d||-2\cdot 2^{-n}$. Since $F$
is an isometry, $F(0)=0$ and $Fd=x$, this implies
$||Ff_\sigma-x||=||Ff_\sigma||+||x||-2\cdot 2^{-n}$. Consequently,
the total length of intersections of the intervals corresponding
to $x$ and to $Ff_\sigma$ is $2^{-n}$ for $\sigma\in \{0,1\}$.

On the other hand, if $\sigma\ne\tau$ and
$\ell(\sigma)=\ell(\tau)=n$,  the functions $f_\sigma$ and
$f_\tau$ are disjointly supported and, therefore,
$||f_\sigma-f_\tau||=||f_\tau-0||+||f_\sigma-0||$. As a result,
$||Ff_\sigma-Ff_\tau||=||Ff_\sigma||+||Ff_\tau||$. This means that
the intersections of the intervals corresponding to $Ff_\sigma$
and $Ff_\tau$ have total length $0$. It does not immediately imply
that  vectors $Ff_\sigma$ and $Ff_\tau$ are disjointly supported:
one can imagine, for example, that $Ff_\sigma$ contains the
interval joining $(i,0)$ and $(i,\frac14)$ and $Ff_\tau$ contains
the interval joining $(i,0)$ and $(i,-\frac14)$.

Let us define the vector $x_\sigma$ for $\sigma$ satisfying
$\ell(\sigma)=n$ as a vector for which the corresponding intervals
are intersections of the intervals corresponding to $x$ and to
$Ff_\sigma$. The previous paragraphs imply that $x_\sigma$ and
$x_\tau$ satisfy $||x_\sigma||=||x_\tau||=2^{-n}$ and have
disjoint supports when $\ell(\sigma)=\ell(\tau)=n$ and
$\sigma\ne\tau$ (for the latter we use the fact that the interval
corresponding to $x$ at $i$ can have `positive' or `negative'
part, but not both).

Finally, let $s=\sum_{\sigma,~\ell(\sigma)=n}x_\sigma$. With the
preceding arguments, we conclude that $||s||=1$ and $|s_i|\le
|x_i|$ for each $i\in\mathbb{N}$. Thus, $s=x$, and the desired
decomposition of $x$ is completed.
\end{proof}

\subsection*{Acknowledgement}

The second-named author gratefully acknowledges the support by
National Science Foundation grant NSF DMS-1700176.

\end{large}

\renewcommand{\refname}{\section*{References}}

\end{document}